\pgfplotsset{compat=1.18} 
\newtheorem{theorem}{Theorem}
\newtheorem{lemma}{Lemma}
\newtheorem{remark}{Remark}
\newtheorem{corollary}{Corollary}
\newtheorem{proposition}{Proposition}
\newtheorem{example}{Example}
\newcommand{\NN}{\mathbb{N}}
\newcommand{\PP}{\mathbb{P}}
\newcommand{\RR}{\mathbb{R}}
\newcommand{\TT}{\mathbb{T}}
\newcommand{\ZZ}{\mathbb{Z}}
\newcommand{\B}{\mathcal{B}}
\newcommand{\C}{\mathcal{C}}
\newcommand{\D}{\mathcal{D}}
\newcommand{\I}{\mathcal{I}}
\renewcommand{\O}{\mathcal{O}}
\newcommand{\Z}{\mathcal{Z}}
\title[PLE of Cat Map]{Uniform Positivity of the Lyapunov Exponent for $C^1$ Monotone potentials generated by the Cat Map}
\author{Nicholas Chiem}
\address{Department of Mathematics, University of California Riverside, Riverside, CA-92521, USA}
\email{nchie005@ucr.edu}
\begin{document}

\begin{abstract}
    We consider an Arnold's Cat Map generated $C^1$ bounded potential with the directional derivative in the unstable direction bounded away from zero. We show that the Lyapunov exponent for the associated Shr\"odinger Operator is uniformly positive for all energies provided the coupling is sufficiently large. 
\end{abstract}

\maketitle

\section{Introduction}
We let $\TT^2 = \RR^2/\ZZ^2$. Define a linear map $T:\RR^2 \rightarrow \RR^2$ 
\[T(x,y) = (2x+y,x+y)\]
which, abusing notation, induces a map $T:\TT^2 \rightarrow \TT^2$
\[\omega :=(\omega_1,\omega_2) \mapsto (2\omega_1 + \omega_2, \omega_1+ \omega_2) \mod \ZZ.\]
We equip $\TT^2$ with Borel $\sigma$-algebra $\B$ and measure $\mu = m^2$ where $m= Leb$. The family of Schr\"odinger Operators is defined as $H_{\lambda, v, \omega}: \ell^2(\ZZ) \rightarrow \ell^2(\ZZ)$
\begin{equation}\label{d.sop}
[H_{\lambda,v,\omega}\psi]_n = \psi_{n+1} + \psi_{n-1} + \lambda v(T^n\omega)\psi_n
\end{equation}
where $\lambda\in \RR$ is the \textbf{coupling constant}, $v:\TT^2 \rightarrow \RR$ is a measurable \textbf{potential}, and $\omega\in \TT^2$ is the \textbf{phase}. 

We study the eigenvalue equation $H_{\lambda,v,\omega}\psi = E\psi$, which admits a map $A^{(E-\lambda v)}: \TT^2 \rightarrow \mathrm{SL}(2,\RR)$
\begin{equation}\label{d.sc}
    A^{(E-\lambda v)}(\omega) = \begin{pmatrix}
    E - \lambda v(\omega) & -1\\
    1 &0
\end{pmatrix}
\end{equation}
which is known as a \textbf{cocycle} map. We can then define a family of dynamical systems
\begin{align*}
    (T, A): \TT^2 \times \RR^2 &\rightarrow \TT^2 \times \RR^2\\
    (\omega, \vec{v}) &\mapsto (T\omega, A^{(E-\lambda v)}(\omega) \vec{v}) 
\end{align*}
called \textbf{Schr\"odinger cocycles}. We write iterates of the map as $(T,A^{(E-\lambda v)})^n = (T^n, A^{(E-\lambda v)}_n)$, where $A^{(E-\lambda v)}_n$ is defined as
\[A^{(E-\lambda v)}_n(\omega) = \begin{cases}
    A^{(E-\lambda v)}(T^{n-1}\omega) \cdots A^{(E-\lambda v)}(\omega) & n > 0\\
    I_2 & n = 0\\
    A^{(E-\lambda v)}(T^{n}\omega)^{-1} \cdots A^{(E-\lambda v)}(T\omega)^{-1} & n< 0
\end{cases}\]
and referred to as the \textbf{n-step transfer matrix}. The eigenvalue equation is directly connected to cocycles, as
\[H_{\lambda,v,\omega}\psi = E\psi \iff \begin{pmatrix}
    \psi_{n}\\
    \psi_{n-1}
\end{pmatrix} = A^{(E-\lambda v)}_n \begin{pmatrix}
    \psi_0\\
    \psi_{-1}
\end{pmatrix} \text{ for all } n\in \ZZ\]
for any sequence $(\psi_n)_{n\in \ZZ}$ and not necessarily in $\ell^2(\ZZ)$. The \textbf{Lyapunov exponent} is defined as
\begin{align}
    L(E; \lambda) &= \lim_{n\rightarrow \infty} \frac{1}{n}\int_{\TT^2} \log||A^{(E-\lambda v)}_n(\omega)|| d\mu(\omega)\\
    &= \inf_{n\geq 1} \frac{1}{n}\int_{\TT^2} \log||A^{(E-\lambda v)}_n(\omega)|| d\mu(\omega) \nonumber \\
    &\geq 0 \nonumber
\end{align}
where the existence of the limit follows by subadditivity. Moreover, by Kingman's Subadditive Ergodic Theorem we find
\[L(E;\lambda) = \lim_{n\rightarrow \infty}\frac{1}{n}\log||A^{(E-\lambda v)}_n(\omega)|| d\mu(\omega) \text{ for a.e } \omega\in \TT^2.\]

Recall for the Cat Map that
\[\alpha = \frac{3+\sqrt{5}}{2} \text{ and } \vec{u} = \begin{pmatrix}
        \frac{1+\sqrt{5}}{2}\\
        1
\end{pmatrix}\]
is the largest eigenvalue and the corresponding eigenvector or unstable vector, respectively. We recall that an unstable foliation of the Cat Map has dense leaves in $\TT^2$. For our purposes, we cut the dense leaves into line segments. More precisely, for $\omega\in \TT^2$ one can define a unique line segment in the unstable direction containing $\omega$. This line segment is defined to begin and end on the boundary of the fundamental domain and any element on the interior of the line segment is not a boundary point of the fundamental domain. Moreover, the line segment only includes the starting boundary point and not the ending boundary. We call the line segments \textbf{local unstable leaves} and can parametrize the family of local unstable leaves by projecting each local unstable leaf to the diagonal of the fundamental domain. That is, for each $z\in (0,\sqrt{2})$ we denote $W_z$ as a local unstable leaf. Geometrically, one has: 
\begin{center}
    \begin{tikzpicture}[scale=0.75]
    \begin{axis}[xmin=0, xmax=1.1, ymin=0, ymax=1.1, axis x line=middle, axis y line=middle]
        \draw[dashed] (1,0) -- (1,1);
        \draw[dashed] (0,1) -- (1,1);
        \addplot[blue]{1-x};
        \addplot[red, smooth,domain=0:.265] {(sqrt(5)-1)*.5*x + 1 - (1+sqrt(5))*.5*.1};
        \addplot[red, smooth,domain=0:.52] {(sqrt(5)-1)*.5*x + 1 - (1+sqrt(5))*.5*.2};
        \addplot[red, smooth,domain=0:.785] {(sqrt(5)-1)*.5*x + 1 - (1+sqrt(5))*.5*.3};
        \addplot[red, smooth,domain=0:1] {(sqrt(5)-1)*.5*x + 1 - (1+sqrt(5))*.5*.4};
        \addplot[red, smooth,domain=0:1] {(sqrt(5)-1)*.5*x + 1 - (1+sqrt(5))*.5*.5};
        \addplot[red, smooth,domain=0:1] {(sqrt(5)-1)*.5*x + 1 - (1+sqrt(5))*.5*.6};
        \addplot[red, smooth,domain=0:1] {(sqrt(5)-1)*.5*x + 1 - (1+sqrt(5))*.5*.7};
        \addplot[red, smooth,domain=0:1] {(sqrt(5)-1)*.5*x + 1 - (1+sqrt(5))*.5*.8};
        \addplot[red, smooth,domain=0:1] {(sqrt(5)-1)*.5*x + 1 - (1+sqrt(5))*.5*.9};
    \end{axis}
\end{tikzpicture}
\end{center}
Each red line represents a local unstable leaf and the blue line is the diagonal that gives a natural parametrization of $W_z$. Let $\ell_z$ denote be the length of $W_z$, then a direct computation shows that $\ell_z\in [0, \sqrt{\frac{5-\sqrt{5}}{2}}]$ and $W_z$ is homeomorphic to $[0,\ell_z)$.

\section{Main Result}
Throughout the paper we let $c,C$ represent universal constants where $c$ is some small constant and $C$ is some large constant. These constants only depend on the bounded measurable potential $v:\TT^2\rightarrow \RR$ that satisfies
\begin{enumerate}
    \item[(i)] $v\in C^1([0,1)^2, \RR)$.
    \item[(ii)] For all $\omega\in \TT^2$, then the directional derivative of $v$ in the unstable direction is uniformly away from zero. That is, $D_{\vec{u}}v(\omega) > c$. 
\end{enumerate}
We provide some enjoyable examples of such potentials. 
\begin{example}
    Fix an $n\in \NN$ and define a polynomial in two variables
    \begin{align*}
        v: [0,1)^2 &\rightarrow \RR\\
        (\omega_1,\omega_2) &\mapsto \sum_{k=0}^na_k \omega_1^k + b_k \omega_2^k
    \end{align*}
    where $a_k,b_k \in \RR\geq 0$ and $a_1 \neq 0$ or $b_1\neq 0$. Then $v\in C^1([0,1)^2)$ and bounded above by $\sum_{k=0}^n a_k + b_k$. A direct computation gives the Jacobian of $v$:
    \[J_v = \begin{pmatrix}
        \sum_{k=1}^{n} ka_k \omega_1^{k-1}& \sum_{k=1}^{n} kb_k\omega_2^{k-1} 
    \end{pmatrix}\]
    Then the directional derivative with respect to the unstable direction is:
    \[D_{\vec{u}}v(\omega) = J_v\vec{u} = u_1\sum_{k=1}^{n} ka_k \omega_1^{k-1} + u_2 \sum_{k=1}^{n} kb_k \omega_2^{k-1} \geq u_1a_1 + u_2b_1\]
    Therefore for any $\omega \in [0,1)^2$, we may conclude $D_{\vec{u}}v(\omega) > c$.
\end{example}

\begin{example}
    Consider an exponential function defined as
    \begin{align*}
        v:[0,1)^2 &\rightarrow \RR\\
        (\omega_1, \omega_2) &\mapsto e^{\omega_1+ \omega_2}
    \end{align*}
    which is $C^1([0,1)^2)$ and bounded by $e^2$. The Jacobian becomes
    \[J_v = \begin{pmatrix}
        e^{\omega_1+ \omega_2} & e^{\omega_1+ \omega_2}
    \end{pmatrix}.\]
    While the directional derivative in the unstable direction 
    \[D_{\vec{u}}v(\omega) = (u_1+u_2)e^{\omega_1+ \omega_2} \geq u_1 + u_2 > c\]
    which holds for any $\omega\in [0,1)^2$. 
\end{example}

\begin{example}
    Consider the logarithm function defined as
    \begin{align*}
        v:[0,1)^2 &\rightarrow \RR\\
        (\omega_1, \omega_2) &\mapsto \log(1+\omega_1+\omega_2)
    \end{align*}
    which is $C^1([0,1)^2)$ and bounded by $\log(3)$. Computing we find
    \[J_v = \begin{pmatrix}
        \frac{1}{1+\omega_1+\omega_2} & \frac{1}{1+\omega_1+\omega_2}
    \end{pmatrix}\]
    and it follows that $D_{\vec{u}}v(\omega) \geq \frac{1}{3}(u_1 + u_2) > c$.
\end{example}

From now on, WLOG, we will assume that $||v||_\infty \leq 1$ and remark that monotonicity implies discontinuities will occur on the boundary of the fundamental domain. 
\begin{theorem}\label{t.PLE}
    Let $v$ be defined as above and consider the family of Schr\"odinger operators as in (\ref{d.sop}). Then there exists a $C_0 = C_0(v)>0$ such that for any $\lambda>0$ we have
    \[L(E;\lambda) > \log\lambda - C_0 \text{ for all } E\in \RR.\]
\end{theorem}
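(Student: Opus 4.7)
My plan is to establish the lower bound via a Herman-type averaging argument whose central input is a uniform density bound for the push-forward $v_*\mu$, extracted from the monotonicity hypothesis along the unstable foliation. First, I would disintegrate $\mu$ along the local unstable leaves $\{W_z\}_{z\in(0,\sqrt{2})}$: since $v|_{W_z}$ is strictly monotone with slope bounded below by $c=\inf D_{\vec u}v$, each conditional push-forward $(v|_{W_z})_*\mathrm{Leb}_{W_z}$ has density at most $1/c$, and integrating over the transversal shows that $v_*\mu$ has a density bounded by a universal constant. Consequently
\[
\int_{\TT^2}\log|E-\lambda v(\omega)|\,d\mu(\omega)\;=\;\log\lambda+\int_{\TT^2}\log|v(\omega)-E/\lambda|\,d\mu(\omega)\;\geq\;\log\lambda-C_1
\]
uniformly in $E\in\RR$, using that $\int\log|x-t|\,d\rho(x)$ is bounded below uniformly in $t$ for any compactly supported $\rho$ with bounded density.

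The second ingredient is the exponentially-improving monotonicity under iteration. Since the Cat Map expands the unstable direction by $\alpha=\tfrac{3+\sqrt{5}}{2}$, the directional derivative $D_{\vec u}(v\circ T^k)$ along any local unstable leaf is bounded below by $c\alpha^k$. Hence the set of starting points $\omega\in W_z$ for which some iterate satisfies $|E-\lambda v(T^k\omega)|<\delta$ has total leaf-length
\[
\leq\sum_{k\geq 0}\frac{2\delta}{c\alpha^k\lambda}\;\leq\;\frac{C\delta}{\lambda},
\]
independent of $n$. This ``no-clustering'' estimate, the structural payoff of combining hyperbolicity with monotonicity, says that deep resonances along a single orbit are no more frequent than a single deep resonance in the ambient space, and in fact that two simultaneous deep resonances on the same orbit are much rarer still.

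The heart of the proof is to convert these two facts into a pointwise lower bound on $\log\|A_n^{(E-\lambda v)}(\omega)\|$. Writing $a_k=E-\lambda v(T^k\omega)$, the first column of the transfer matrix is the continuant $(p_n,p_{n-1})^\top$ with $p_{k+1}=a_kp_k-p_{k-1}$, $p_0=1$, $p_{-1}=0$, so $\|A_n^{(E-\lambda v)}(\omega)\|\geq|p_n|$. Partitioning $\{0,\ldots,n-1\}$ at the ``bad'' indices (those with $|a_k|\leq\lambda^{1/2}$) and combining an inductive bound on good blocks with an avalanche-type gluing across bad blocks, one establishes
\[
\log\|A_n^{(E-\lambda v)}(\omega)\|\;\geq\;\sum_{k=0}^{n-1}\log|a_k|\;-\;C_2\cdot\#\{k<n:|a_k|\leq\lambda^{1/2}\}\;-\;C_3,
\]
where good indices feed the continuant their full $\log|a_k|$ of growth and each rare isolated bad index costs only a bounded correction, provided clusters of deep resonances along a single orbit are excluded by the estimate of Step~2.

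Finally, since the Cat Map is uniquely ergodic, Birkhoff's theorem gives, for $\mu$-a.e.\ $\omega$,
\[
\frac{1}{n}\sum_{k<n}\log|a_k|\;\longrightarrow\;\int_{\TT^2}\log|E-\lambda v|\,d\mu\;\geq\;\log\lambda-C_1,\qquad \frac{1}{n}\#\{k<n:|a_k|\leq\lambda^{1/2}\}\;\longrightarrow\;\mu\{|E-\lambda v|\leq\lambda^{1/2}\}\;\leq\;C_4\lambda^{-1/2},
\]
and combining with Kingman's subadditive ergodic theorem and the pointwise bound above yields $L(E;\lambda)\geq\log\lambda-C_0$ uniformly in $E\in\RR$. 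I expect the main obstacle to lie in the pointwise continuant estimate of Step~3: a single catastrophically small $|a_k|$, or worse a short cluster of such values, can ruin the naive induction, so the argument must blend the no-clustering estimate from Step~2 (which rules out consecutive deep resonances along the orbit with overwhelming measure) with a careful avalanche/continuant accounting that keeps the per-bad-index correction genuinely $O(1)$.
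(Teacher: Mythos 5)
Your plan is a genuinely different route from the paper's --- the paper first conjugates the Schr\"odinger cocycle to a Young-type form $\Lambda(T\omega)R_{\theta(\omega)}$ via polar decomposition, then tracks the angle variable $\theta_n$ in $\RR\PP^1$ along the unstable leaves, and bounds $\int_{W_z}\log|\cos\theta_j|\,dm$ term by term (Lemmas \ref{l.lb}--\ref{l.card}, Corollary \ref{c.bm}), whereas you work directly with the continuant $p_n$ and a Herman/Thouless-style average of $\log|E-\lambda v|$. However, there is a genuine quantitative error at the heart of your Step 2 that the rest of the argument leans on.

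You write that the set of $\omega\in W_z$ for which $|E-\lambda v(T^k\omega)|<\delta$ has leaf-measure at most $2\delta/(c\alpha^k\lambda)$, which makes the sum over $k$ converge and yields a ``no-clustering'' estimate independent of $n$. This ignores the fact that $v\circ T^k$ restricted to $W_z$ is not globally monotone: $T^kW_z$ has length $\approx\alpha^k\ell_z$ and wraps around the torus, so $v\circ T^k|_{W_z}$ has on the order of $\alpha^k$ monotone pieces (this is exactly Lemma \ref{l.disc} in the paper). Each piece contributes a pre-image of measure $\lesssim\delta/(\lambda\alpha^k)$, but there are $\approx\alpha^k$ of them, so the measure of the level-$k$ bad set is $\approx C\delta/\lambda$ --- \emph{uniformly in $k$}, with no geometric decay. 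Consequently $\sum_k$ diverges and your union bound does not give the claimed $C\delta/\lambda$; there is no dimension in which deep resonances become rarer at later times. The paper's Lemma \ref{l.card} and Corollary \ref{c.bm} confront precisely this tension between the growing derivative and the growing number of branches, and the resolution is \emph{not} to sum over $k$ but to bound each $\int_{W_z}\log|\cos\theta_k|\,dm$ by a fixed constant $-C$ and then average over $k\le n$.

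Two further issues are worth flagging. First, the pointwise avalanche estimate $\log\|A_n\|\ge\sum\log|a_k|-C_2\cdot\#\{\text{bad }k\}-C_3$ with an $n$- and $\lambda$-independent $C_2$ is not established, and in the large-$\lambda$ regime a single angular near-resonance can cost $O(\log\lambda)$ rather than $O(1)$; the accounting would need $C_2$ to depend on $\lambda$, which then interacts nontrivially with the (now unavailable) clustering bound. Second, the Cat Map is \emph{not} uniquely ergodic --- it is a hyperbolic toral automorphism with a dense set of periodic orbits, each supporting an invariant measure --- so Birkhoff gives a.e.\ convergence with respect to Lebesgue, not the uniform convergence that unique ergodicity would provide. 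This last point is fixable (a.e.\ convergence plus dominated convergence suffices for the Lyapunov exponent), but as stated the claim is false.
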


With $\lambda>0$ sufficiently large we can guarantee uniform positivity of the Lyapunov exponent, which is a signature of chaos in a dynamical system. 
Moreover, the Lyapunov exponent plays a key role in spectral analysis of ergodic one-dimensional Schr\"odinger operators, see \cite{wilkinson}. In particular, having positive Lyapunov exponent can be a strong indication of \textbf{Anderson Localization}. That is, the family of operators has pure point spectrum with exponentially decaying eigenfunctions. This phenomenon is known to occur for potentials that are sufficiently random and a well studied case is when the potential is generated by iid random variables. In general, the question of being able to achieve localization with less random base dynamics is difficult.

One of the difficulties of achieving localization is being able to achieve uniformly positive Lyapunov exponents. Showing uniform positivity depends on the dynamics and two well known cases lie in different degrees of randomness. The most random case is when the potentials are generated by iid random variables. In this case, uniform positivity was established by Furstenberg \cite{fursten} for all nonzero couplings. On the other hand, for quasi-periodic potentials, it has been shown that a large class of potentials can achieve uniform positivity for large couplings. 

For intermediate randomness much less is known. Our focus is on hyperbolic dynamics. For small coupling constants, \cite{ChuSpe} showed positivity away from the center and edges of the spectrum for deterministic potentials using methods from \cite{FigPas}. In the same spectral regime, \cite{BouSch} were able to establish a large deviation theorem using methods from \cite{FigPas}. This allowed them to achieve Anderson localization for almost every phase. On the other hand, for sufficiently large couplings and sufficiently large hyperbolicity, \cite{Bje} established positivity for circle endomorphisms and \cite{ZhaLi} showed positivity for all energies on expanding toral automorphisms. Moreover, \cite{Bou2} established positivity for any $\lambda\neq 0$, as long as one has sufficient hyperbolicity and $C^1$ non-constant potentials. Positivity for hyperbolic base dynamics was also shown by \cite{SS}, \cite{DK}, \cite{K}. Recently, a more complete answer for hyperbolic base dynamics was given from \cite{AvDaZh}. The group is able to show that for $\alpha$-Holder continuous functions, then the set $\Z = \{E\in \RR: L(E) = 0\}$ is discrete. Moreover, there exists a constant $\lambda_0(v)$ so that $\Z$ if finite for all $0 < \lambda < \lambda_0$. Lastly, there is an open dense set $\O^\alpha\subset C^\alpha(\TT^d,\RR)$ such one has uniformly positive Lyapunov exponents for all potentials in $\O^\alpha$. For more general information on results of positive Lyapunov exponents we refer to \cite{WanZha}. 

Returning to the context of our problem, Young \cite{young} remarks that if one has a cocyle of the form
\begin{equation}\label{d.ycoc}
    A_\epsilon(x,y) = \begin{pmatrix}
    \lambda & 0\\
    0 & \lambda^{-1}
    \end{pmatrix}R_{\phi_\epsilon(x,y)}
\end{equation}
then one has positive Lyapunov exponents as long as $\phi_\epsilon$ satisfies three conditions. Letting $J_\epsilon\subset S^1$ compact, then
\begin{enumerate}
    \item[(a)] $\phi_\epsilon \equiv 0$ outside of $J_\epsilon \times S^1$.
    \item[(b)] On $J_\epsilon \times S^1$, $\phi_\epsilon$ increases monotonically from $0$ to $2\pi$ on the leaves of an unstable foliation.
    \item[(c)] On $\phi_\epsilon^{-1}[\beta, 2\pi - \beta]$, the directional derivatives along the leaves of an unstable foliation are $\geq \epsilon^{-1}$.
\end{enumerate}

Our paper, inspired by \cite{zhenghe2}, focuses on Shr\"odinger Cocycles which can be reduced to a similar form of (\ref{d.ycoc}). The difference from (\ref{d.ycoc}) to our setting is that the matrix entries $\lambda$ are no longer constants, but rather functions. This can be dangerous as this may cancel the expanding effects of the base dynamics. We remark that this cancellation is where sufficient hyperbolicity is useful for \cite{ZhaLi}, \cite{Bou2}, \cite{Bje}. 

In our setting, this becomes a battle between the hyperbolic base dynamics represented by eigenvalue $\alpha$ and the hyperbolicity of the cocycle represented by the sufficiently large coupling constant $\lambda$. We are able to control this battle with the key assumption that the function attached to the rotation matrix has directional derivative, in the unstable direction, uniformly bounded away from zero. Moreover, this assumption generalizes the function attached to the rotation matrix.

The core idea for us to show positive Lyapunov exponents is to reduce the problem to the local unstable leaves. This is done through an application of Fubini's theorem, which allows us to integrate over $z\in(0,\sqrt{2})$ and the $W_z$. After reduction, we use the fact that each $W_z$ has the property that every $T$-action on $W_z$ moves the unstable local leaf to another unstable local leaf and stretches the length by a factor of $\alpha$. This stretching will never form a closed loop, which is a consequence of the irrational slope. This implies that the image of $W_z$ under $T^n$ becomes a subset of a union of $W_j$ for $j\in \Lambda$ for an indexing set $\Lambda$. The number of elements in the union is completely dependent on the length of $T^nW_z$. This reduces the 2-dimensional problem to 1-dimension, which allows us to use arguments similar to \cite{zhenghe2}.  

Moreover, the argument provided for the Cat Map works for any hyperbolic $\mathrm{SL}(2,\ZZ)$ matrix. Let $T$ represent any hyperbolic matrix action on $\TT^2$ and let $\beta>1$ represent the largest eigenvalue. There are two cases that occur for the unstable leaves, which depend on whether the associated eigenvector produces an irrational or rational slope. The first case is identical to the Cat Map where the only difference is swapping $\alpha$ with $\beta$ for all proofs. The second case is when the unstable leaves become copies of $\RR/\ZZ$. This means that the action of $T$ on the unstable leaves behaves exactly as the map $x\mapsto \beta x$ on $\RR/\ZZ$. Therefore, by the argument in \cite{zhenghe2} and replacing $2$ with $\beta$ gives positivity Lyapunov exponents on unstable leaves. We formalize this exposition in a theorem.
\begin{theorem}\label{t.hple}
    Consider any hyperbolic matrix in $\mathrm{SL}(2,\ZZ)$ with eigenvalues $\beta, \beta^{-1}$ where $\beta> 1$. If $v$ is defined as in Theorem \ref{t.PLE}, then there exists a $C_0 = C_0(v)>0$ such that for any $\lambda > 0$ we have
    \[L(E;\lambda) > \log \lambda - C_0 \text{ for all } E\in \RR.\]
\end{theorem}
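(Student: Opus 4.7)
The plan is to reuse the proof of Theorem \ref{t.PLE} almost verbatim, with $\alpha$ replaced by $\beta$, after a short case split on the geometry of the unstable foliation of $T$. First I would record the elementary linear-algebra facts: any hyperbolic $T\in\mathrm{SL}(2,\ZZ)$ has $|\mathrm{tr}\,T|\geq 3$, so $\beta=\tfrac{|\mathrm{tr}\,T|+\sqrt{\mathrm{tr}^2\,T-4}}{2}>1$ is an irrational quadratic integer, and an unstable eigenvector $\vec u$ with $T\vec u=\beta\vec u$ is well-defined up to scale. The slope of $\vec u$ is either irrational or rational; in the first case the unstable foliation on $\TT^2$ has dense leaves (the situation of the Cat Map), and in the second each leaf closes up into a circle homeomorphic to $\RR/\ZZ$.

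In the irrational-slope case I would redo the local-unstable-leaf construction: cut each dense eigenline into pieces $W_z$ by the boundary of the fundamental domain, parametrized by their intersection with a transversal (a diagonal works). The geometric facts used in Theorem \ref{t.PLE} carry through unchanged: the $T$-image of $W_z$ is a disjoint union of local unstable leaves whose lengths sum to $\beta\ell_z$; iterating, $T^n W_z$ is a subset of a union $\bigcup_{j\in\Lambda_n(z)}W_j$ with $|\Lambda_n(z)|\lesssim \beta^n$; and the hypothesis $D_{\vec u}v>c$ is unchanged because it is a hypothesis on $v$ alone. Fubini then reduces the $\TT^2$-integral defining $L(E;\lambda)$ to an integral over $z\in(0,\ell_{\max})$ of one-dimensional integrals along $W_z$, and the one-dimensional monotone argument (following \cite{zhenghe2}) that closes the proof of Theorem \ref{t.PLE} runs verbatim with $\alpha$ replaced by $\beta$, producing $L(E;\lambda)>\log\lambda - C_0$.

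In the rational-slope case each leaf of the unstable foliation is already a circle in $\TT^2$. After affine identification of a leaf with $\RR/\ZZ$, the action of $T$ on this leaf is the linear expanding endomorphism $x\mapsto\beta x$ on $\RR/\ZZ$, and the restriction of the Schr\"odinger cocycle to the leaf is a cocycle over this expanding map sampled by a $C^1$ strictly monotone function (monotonicity along the leaf is again $D_{\vec u}v>c$). This is exactly the situation of \cite{zhenghe2} except that the expansion factor $2$ is replaced by $\beta$; the argument there uses only that the base is a linear expanding circle map, so it goes through with the obvious substitutions and yields the same lower bound on the Lyapunov exponent of the cocycle restricted to the leaf. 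Integrating the per-leaf bound against the transverse measure (again Fubini) gives $L(E;\lambda)>\log\lambda - C_0$ on $\TT^2$.

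The main obstacle is the bookkeeping in the irrational-slope case: for the Cat Map the splitting of $T^n W_z$ into pieces $W_j$ was controlled using specific arithmetic of the golden-ratio eigenvalue, and one has to check that the same counting works for a general hyperbolic $T$. The required estimate is the essentially combinatorial statement that a straight segment of irrational slope $s$ and length $L$ in $\RR^2$ crosses the integer lattice in at most $\lceil L(1+|s|)/\sqrt{1+s^2}\rceil + O(1)$ pieces with average piece length comparable to $L/$(number of pieces). Once this replacement of the Cat Map-specific counting is in place, the monotonicity-driven argument that closes the proof of Theorem \ref{t.PLE} closes the proof here as well, and the constant $C_0$ depends only on $v$ and on $\beta$.
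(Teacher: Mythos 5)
Your proposal matches the paper's argument essentially verbatim: the paper also handles Theorem~\ref{t.hple} by splitting into the irrational-slope case (where the Cat Map proof runs unchanged with $\alpha$ replaced by $\beta$, since Lemmas~\ref{l.lb}--\ref{l.card} use only the stretching factor and not golden-ratio arithmetic) and the rational-slope case (where the unstable leaves close up as circles and the base restricts to $x\mapsto\beta x$ on $\RR/\ZZ$, so the argument of \cite{zhenghe2} applies with $2$ replaced by $\beta$). The only point worth noting — which affects neither your argument nor the paper's — is that for a hyperbolic element of $\mathrm{SL}(2,\ZZ)$ the eigenvalue $\beta$ is always irrational and the unstable slope $(\beta-a)/b$ is therefore always irrational, so the rational-slope case is in fact vacuous.
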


\section{Proof of Theorems}
Before we are able to show positive Lyapunov exponents we must move to a cocycle of similar form to (\ref{d.ycoc}). This is done by polar decomposition of the Schr\"odinger Cocycle and details are written in Appendix \ref{apd}. To summarize, we 
let $t = \frac{E}{\lambda}$ and $t\in \I = [-1, 2]$. Define functions $r,g: \TT^2\times \I \rightarrow \RR$ as
\begin{align*}
    r(\omega, t) &= t - v(\omega)\\
    g(\omega, t) &= r^2(\omega) + 1
\end{align*}
By the assumptions on $v$ we have
\[c < g(\omega,t) < C \text{ for all } \TT^2\times \I\]
Polar decomposition of $A^{(E-\lambda v)}$ allows us to define
\begin{equation}\label{e.coc}
    A(\omega, t, \lambda):= \Lambda(T\omega)O(\omega) = \begin{pmatrix}
    \lambda \sqrt{g(T\omega, t)} & 0 \\
    0 & \frac{1}{\lambda} \sqrt{\frac{1}{g(T\omega, t)}}
\end{pmatrix}R_{\theta(\omega, t)}
\end{equation}
where 
\[R_\gamma = \begin{pmatrix}
    \cos\gamma & -\sin\gamma\\
    \sin\gamma & \cos\gamma
\end{pmatrix}\]
and $\theta(\omega,t)$ is 
\[\theta(\omega, t) = \cot^{-1}(t-v(\omega)).\]
Let $\theta_0 = \theta(\omega,t)$ and for every $n\in \NN$ we define
\[\phi_n(\omega, t) = \cot^{-1}(\lambda^2 g(T^n\omega)\theta_{n-1}) \text{ and } \theta_n(\omega, t) = \phi_n(\omega, t) + \theta(T^n\omega).\]

We denote the Lyapunov exponent for the cocycle $(T, A(\cdot, t, \lambda))$ as $L(t;\lambda)$. By an argument in \cite{zhenghe1}, it suffices to show the following Theorem \ref{t.plepd} to prove Theorem \ref{t.PLE}.
\begin{theorem}\label{t.plepd}
    For $v$ as in Theorem \ref{t.PLE}, then there exists a $C_0 = C_0(v)>0$ such that for any $\lambda>0$ we have
    \[L(t;\lambda) > \log\lambda - C_0 \text{ for all } t\in \I.\]
\end{theorem}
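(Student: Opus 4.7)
The plan is to exploit the polar decomposition (\ref{e.coc}) to express the norm of the iterated cocycle as an accumulation of hyperbolic stretches of size $\log\lambda$ balanced against rotation losses measured by the Pr\"ufer-type phases $\theta_k$, and then to apply Fubini along the unstable foliation to reduce the Lyapunov integral to a one-dimensional estimate on each leaf $W_z$. Applying the iterated cocycle to a unit vector and tracking the dominant component gives a lower bound of the form
\[\log\|A_n(\omega,t,\lambda)\| \geq n\log\lambda + \tfrac{1}{2}\sum_{k=1}^{n}\log g(T^k\omega,t) - \sum_{k=0}^{n-1}\log\frac{1}{|\sin\theta_k(\omega,t)|} - C.\]
Since $c<g<C$ uniformly, proving Theorem \ref{t.plepd} reduces to the integrated estimate
\[\int_{\TT^2}\sum_{k=0}^{n-1}\log\frac{1}{|\sin\theta_k(\omega,t)|}\,d\mu(\omega) \leq Cn\]
uniformly in $n\in\NN$ and $t\in\I$.

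To prove this integrated bound, I would disintegrate $\mu$ along the unstable foliation via Fubini, writing
\[\int_{\TT^2} F\,d\mu = \int_0^{\sqrt{2}}\int_{W_z} F(\omega)\,d\ell_z(\omega)\,dz\]
(up to a bounded Jacobian from the diagonal parametrization). On a fixed leaf $W_z$, the map $T^n$ is a uniform affine expansion by $\alpha^n$ along the unstable direction, and $T^n(W_z)$ decomposes into $O(\alpha^n\ell_z)$ pieces, each contained in some local unstable leaf $W_j$. This converts the two-dimensional problem into a one-dimensional one on $W_z$, after which the argument of \cite{zhenghe2} can be adapted with $2$ replaced by $\alpha$.

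The one-dimensional estimate uses hypothesis (ii) in an essential way: since $D_{\vec{u}}v(\omega)>c$ and $\cot^{-1}$ is smooth with uniformly bounded derivative on bounded intervals, the function $\theta(\omega,t)=\cot^{-1}(t-v(\omega))$ is strictly monotone along each unstable leaf with $|\partial_{\vec{u}}\theta|\geq c'>0$. Consequently, for every $\delta>0$ the set $\{\omega\in W_z : |\sin\theta(\omega,t)|<\delta\}$ is a union of at most two subintervals of total length $O(\delta)$, yielding
\[\int_{W_z}\log\frac{1}{|\sin\theta(\omega,t)|}\,d\ell_z(\omega) \leq C\ell_z.\]
For each $k\geq 1$, the phase $\theta_k$ restricted to $W_z$ inherits monotonicity from the $\alpha^k$-stretching of $T^k$: pulling back the integral over a single image-piece by $T^{-k}$ contributes a length factor of $\alpha^{-k}$, which cancels against the $O(\alpha^k)$ count of pieces, leaving again an $O(\ell_z)$ bound per iterate. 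Summing over $k=0,\dots,n-1$ and integrating over $z$ produces the required $O(n)$ estimate, and dividing by $n$ gives the constant $C_0(v)$ lost from $\log\lambda$.

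The main obstacle I expect is propagating the monotonicity of $\theta_k$ through the recursion $\theta_k = \phi_k + \theta(T^k\omega)$ with $\phi_k = \cot^{-1}(\lambda^2 g(T^k\omega)\,\theta_{k-1})$, because the coupling to the previous angle could in principle conspire to flatten $\partial_{\vec{u}}\theta_k$ on a large set. The stabilizing mechanism is that each application of $T$ multiplies the unstable-direction derivative by $\alpha>1$, which dominates the bounded multiplicative distortion contributed by $g$ and by the $\cot^{-1}$ in $\phi_k$; consequently $|\partial_{\vec{u}}\theta_k|$ stays bounded below by a (possibly growing) constant on $W_z$, and the bad sets where $|\sin\theta_k|<\delta$ have total length that remains summable. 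Once this inductive control is in place, the $\log\lambda$-gain per step absorbs the bounded per-step loss into a single $C_0(v)$, and combining with Theorem 2 of \cite{zhenghe1} (which permits passing from polar-decomposed to original cocycles) delivers Theorem \ref{t.PLE} as a corollary.
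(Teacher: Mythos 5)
Your proposal follows essentially the same road map as the paper: polar-decompose the cocycle, bound $\log\|A_n\|$ from below by $n\log\lambda$ minus a sum of angular losses, disintegrate the Lyapunov integral along unstable leaves via Fubini, and on each leaf play off the $\alpha^k$-expansion of $T^k$ against the $O(\alpha^k)$ count of bad crossings. Two points in the write-up need fixing, and one of them is the technical heart of the theorem.

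First, a small but consequential slip: the per-step loss is $\log|\cos\theta_k|$, not $\log|\sin\theta_k|$. In the polar form $A=\Lambda(T\omega)R_{\theta}$, the hyperbolic gain $\lambda\sqrt{g}$ is collected on the \emph{first} coordinate, so after rotating by $\theta_k$ what survives is proportional to $|\cos\theta_k|$. The relevant bad set is therefore $\{\theta_k\approx \tfrac{\pi}{2}+\pi\ZZ\}$. Your observation that the set $\{|\sin\theta|<\delta\}$ has measure $O(\delta)$ is innocuously true but vacuous here, since $\theta=\cot^{-1}(t-v)\in(c,\pi-c)$ is already bounded away from $0$ and $\pi$; the nontrivial estimate is near $\pi/2$.

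Second, and more seriously, the inductive control of $D_{\vec u}\theta_k$ is asserted rather than established, and the assertion as stated (``bounded below by a possibly growing constant'') is too weak to close the argument. The crossing count on a leaf grows at rate $\alpha^k$, so the speed along the leaf must grow at rate $\alpha^k$ as well in order for the measure of the $\delta$-bad set to stay $O(\delta)$ uniformly in $k$; this is exactly what the paper proves (Lemma~\ref{l.lb}: $D_{\vec u}\theta_n > c\alpha^n$). Moreover, the recursion $\theta_n = \phi_n + \theta\circ T^n$ with $\phi_n = \cot^{-1}(\lambda^2 g(T^n\omega)\cot\theta_{n-1})$ does \emph{not} simply multiply the previous derivative by $\alpha$. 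One must split into two regimes: when $|g(T^n\omega)\cot\theta_{n-1}|$ is small (of order $\lambda^{-3/2}$), the $\phi_n$ term passes through a factor $c\lambda$ and the lower bound is inherited from $D_{\vec u}\theta_{n-1}$; when it is not small, the $\phi_n$ contribution can nearly cancel, and the lower bound must instead be extracted from the $\theta\circ T^n$ term using $D_{\vec u}[v\circ T^n]=\alpha^n D_{\vec u}v(T^n\omega)$. Your phrase ``$\alpha$ dominates the bounded distortion from $g$ and $\cot^{-1}$'' glides over exactly this dichotomy, which is where the work lies. Relatedly, the ``pull back by $T^{-k}$'' mechanism you propose is not literally available: $\theta_k$ is not of the form $F\circ T^k$, so a change of variables $T^{-k}$ does not convert $\int_{W_z}\log|\cos\theta_k|$ into a zeroth-step integral on image leaves. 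The correct substitute is a direct estimate combining the speed bound with a count of the bad points \emph{and} of the discontinuities of $\theta_k|_{W_z}$ (which arise because $v$ is only $C^1$ on the fundamental domain and $T^kW_z$ crosses $\partial[0,1)^2$ about $\alpha^k$ times), followed by a dyadic decomposition of the bad neighborhoods. Once these pieces are in place, your reduction to $\int_{W_z}\log|\cos\theta_k|\,dm > -C$ uniformly in $k$ and $z$ is the right target and the rest of your outline matches the paper.
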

\noindent From this point on we suppress the notation of $t$, as all estimates will be uniform in $t$. 

One can reduce the problem to show positive Lyapunov exponents on unstable local leaves. Observe that for $g\in L^1(\mu)$:
\begin{equation}\label{fubini}
    \int_{\TT^2} g(\omega) d\mu(\omega) = \int_{(0,\sqrt{2})}\int_{W_z}g(\omega) dm(\omega) dm(z),
\end{equation}
which is a direct consequence of \cite[Theorem 2.44]{folland} and Fubini. The purpose of (\ref{fubini}) is that when applied on $\frac{1}{n}\log||A_n(\omega)||$, we find
\[\frac{1}{n}\int_{\TT^2} \log||A_n(\omega)|| d\mu(\omega) = \frac{1}{n}\int_{(0,\sqrt{2})} \int_{W_z} \log||A_n(\omega)|| dm(\omega)dm(z)\]
for every $n\in \NN$. Let us denote the Lyapunov exponent on unstable local leaves as $L(t;\lambda, z)$.
\begin{proposition}\label{p.fPLE}
    There exists a $C_0 = C_0(v)>0$ such that for any $\lambda > 0, z\in (0,\sqrt{2})$
    \[L(t;\lambda, z)> \log \lambda - C_0\]
    for all $t\in \I$.
\end{proposition}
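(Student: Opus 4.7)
The plan is to adapt the one-dimensional monotonicity argument of \cite{zhenghe2} to the foliated setting by reducing all estimates to a single unstable leaf. Fix $z \in (0,\sqrt{2})$ and parametrize $W_z$ by arc-length $s \in [0,\ell_z)$.

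First, I would use the polar decomposition (\ref{e.coc}) together with the angle recursion
\[\cot\phi_n = \lambda^2 g(T^n\omega)\cot\theta_{n-1},\qquad \theta_n = \phi_n + \theta(T^n\omega),\]
to expand
\[\log\|A_n(\omega)\| \ge \sum_{k=1}^n \log\bigl(\lambda\sqrt{g(T^k\omega)}\bigr) + E_n(\omega),\]
where $E_n(\omega)$ is a rotational error term controlled pointwise by a sum of $\log|\sin\phi_k|$. Since $c<g<C$, the diagonal sum already contributes $\ge n\log\lambda - Cn$ after integration, so the task reduces to showing $\int_{W_z}E_n\,dm \ge -C_0 n \ell_z$ with a constant uniform in $t\in\I$.

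The core mechanism is monotonicity combined with base expansion. By hypothesis (ii), $\theta(\omega,t)=\cot^{-1}(t-v(\omega))$ is strictly monotone along $\vec{u}$ with $|D_{\vec{u}}\theta|$ bounded below. Because $T$ stretches $W_z$ by a factor $\alpha$ into a union of at most $O(\alpha^k\ell_z)$ sub-leaves $W_j$, the pullback $\theta\circ T^k$ is piecewise monotone in $s$ with derivative $\ge c\alpha^k$ on each monotonicity interval. I would then propagate this inductively through the cotangent recursion to show that each $\phi_k$ is piecewise monotone in $s$ with a comparable lower bound on $|\partial_s\phi_k|$. On every monotonicity piece $J$, a change of variables gives $m\{s\in J:|\sin\phi_k|<\delta\}\le C\delta$, and summing over the $O(\alpha^k \ell_z)$ intervals covering $W_z$ preserves the bound $m\{s\in[0,\ell_z):|\sin\phi_k|<\delta\}\le C\delta$ (the extra pieces come with proportionally steeper derivatives, compensating for their number). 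A layer-cake argument then gives $\int_{W_z}|\log|\sin\phi_k||\,dm\le C$ uniformly in $k$ and $t$, hence $\int_{W_z}E_n\,dm\ge -Cn$. Dividing by $n$ and passing to the limit produces $L(t;\lambda,z)>\log\lambda-C_0$.

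The main obstacle is propagating monotonicity through the nonlinear recursion $\cot\phi_k=\lambda^2 g(T^k\omega)\cot\theta_{k-1}$. Differentiating in $s$, $\partial_s\phi_k$ receives competing contributions from $g(T^k\omega)$ and from $\theta_{k-1}$, and one must verify they do not cancel. The saving feature is the $\lambda^2$ amplification: it forces $\phi_k$ to sit close to $0$ or $\pi$ except in a thin window around $\{\theta_{k-1}=\pi/2\}$, where $|\sin\phi_k|$ is comparable to $1$ and contributes negligibly to $E_n$; inside that window, the $s$-derivative of $\theta_{k-1}$ is already of order $\alpha^{k-1}$, which dominates any cancellation from $g$. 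Verifying that this balance between the base expansion $\alpha^k$ and the cocycle amplification $\lambda^2$ closes cleanly, while handling the boundaries between sub-leaves $W_j$ where $T^kW_z$ is cut, is the heart of the argument, and corresponds exactly to the ``sufficient hyperbolicity'' phenomenon mentioned in the introduction.
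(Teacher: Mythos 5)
Your overall strategy---reduce to unstable leaves via the Fubini decomposition, propagate a positive lower bound on the arclength-derivative of the angle through the cotangent recursion, deduce a level-set bound of the form $m\{\text{bad angle within }\delta\}\le C\delta$, and finish with a dyadic layer-cake estimate---is precisely the paper's plan (Lemma~\ref{l.lb}, Lemma~\ref{l.card}, Corollary~\ref{c.bm}, and the proof of Proposition~\ref{p.fPLE}). The skeleton is right, and you correctly identify the crux: propagating monotonicity through $\cot\phi_k=\lambda^2 g(T^k\omega)\cot\theta_{k-1}$ using the $\lambda^2$ amplification to dominate the cancellation coming from $g$.

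The gap is in the identification of the rotational error term. You lower-bound $\log\|A_n\|$ by $\sum_k \log(\lambda\sqrt{g(T^k\omega)})$ plus a term that you say is controlled by $\sum_k\log|\sin\phi_k|$, and you then claim $\int_{W_z}|\log|\sin\phi_k||\,dm\le C$ uniformly in $\lambda$. That is the wrong quantity. Tracking $\vec{w}_k=\Lambda(T^k\omega)R_{\theta(T^{k-1}\omega)}\vec{w}_{k-1}$ gives the exact telescoping
\[
\log\|\vec{w}_n\| \;=\; \sum_{k=1}^n\Bigl(\log\lambda+\tfrac12\log g(T^k\omega)+\log|\cos\theta_{k-1}|-\log|\cos\phi_k|\Bigr),
\]
so after discarding the favorable $-\log|\cos\phi_k|\ge0$, the error term one must control is $\sum_k\log|\cos\theta_{k-1}|$---the deficit occurring when $\theta_{k-1}$ sits near $\pi/2$. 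The quantity $|\sin\phi_k|$ behaves in exactly the opposite way: from $\cot\phi_k=\lambda^2g\cot\theta_{k-1}$, one has $|\sin\phi_k|\to1$ precisely in the window where $\theta_{k-1}$ is near $\pi/2$, while $|\sin\phi_k|\lesssim\lambda^{-2}$ on the bulk of $W_z$ where $\theta_{k-1}$ is away from $\pi/2$. Consequently $\int_{W_z}\log|\sin\phi_k|\,dm\approx -2\ell_z\log\lambda$, not $O(1)$, and your closing step would give $L\gtrsim\log\lambda-2\log\lambda=-\log\lambda$, the wrong sign for large $\lambda$. If you replace $|\sin\phi_k|$ by $|\cos\theta_{k-1}|$ (equivalently $\|\theta_{k-1}-\frac{\pi}{2}\|_{\RR\PP^1}$, as the paper does), the remainder of your monotonicity and layer-cake argument matches the paper's proof and closes.
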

With this proposition we can show theorem \ref{t.plepd}.

\begin{proof}[Proof of Theorem \ref{t.plepd}.]
    For $n\in \NN, z\in (0,\sqrt{2})$, define $g_n(\omega) = \frac{1}{n}\log||A_n(\omega)||$ and $f_n(z) = \int_{W_z} g_n(\omega) dm(\omega)$. Notice that
    \[||g_n||_\infty < \log ||A||\]
    and, by definition of $L(t;\lambda, z)$, $f_n(z) \rightarrow L(t;\lambda, z)$ as $n\rightarrow \infty$. Observe that $m(W_z) < \sqrt{2}$ and
    \[||f_n||_{\infty} < \sqrt{2}\log ||A|| \text{ for all } n\in \NN.\]
    The constant function $\sqrt{2}\log||A|| \in L^1(\mu)$, so we may apply Lebesgue Dominated Convergence Theorem. Putting everything together,
    \begin{align*}
        L(t;\lambda) &= \lim_{n\rightarrow \infty} \int_{\TT^2} g_n(\omega) d\mu\\
        &= \lim_{n\rightarrow \infty} \int_{\TT}f_n(z) dm(z)\\
        &= \int_{\TT} \lim_{n\rightarrow \infty} f_n(z) dm(z)\\
        &= \int_{\TT} L(t; \lambda, z) dm(z)\\
        &> \int_{\TT} \log\lambda - C_0 dm(z)\\
        &= \log\lambda - C_0.
    \end{align*}
\end{proof}

To show positive Lyapunov exponents, we use the methods in \cite{zhenghe2} to show on each $W_z$ one has positive Lyapunov exponents. To begin we must control the directional derivative of $D_{\vec{u}}\theta_n$ from below. 

\begin{lemma}\label{l.lb}
    For any $t\in \I$ 
    \begin{equation}\label{i.lb}
        D_{\vec{u}}\theta_n(\omega) > c\alpha^n
    \end{equation}
    for all $n\in \ZZ_{\geq0}$ and $\omega \in \TT^2$ where $\theta_n$ is differentiable.
\end{lemma}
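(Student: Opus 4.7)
The plan is an induction on $n$, based on the decomposition $\theta_n = \phi_n + \theta \circ T^n$ and the expansion property of the Cat Map along $\vec{u}$.

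For the base case $n = 0$, the function $\theta_0 = \cot^{-1}(t - v(\omega))$ is differentiable wherever $v$ is, and a direct computation with the chain rule gives
\[
D_{\vec{u}} \theta_0(\omega) = \frac{D_{\vec{u}} v(\omega)}{1 + (t - v(\omega))^2} = \frac{D_{\vec{u}} v(\omega)}{g(\omega, t)}.
\]
Combining the monotonicity hypothesis $D_{\vec{u}} v(\omega) > c$ with the uniform upper bound $g(\omega, t) < C$ yields $D_{\vec{u}} \theta_0(\omega) > c$, which matches the claim at $n = 0$ after relabeling constants.

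For the inductive step, assume $D_{\vec{u}} \theta_{n-1}(\omega) > c \alpha^{n-1}$ wherever $\theta_{n-1}$ is differentiable. Since $\vec{u}$ is the expanding eigenvector of $T$ with eigenvalue $\alpha$, one has $DT^n(\omega)\, \vec{u} = \alpha^n \vec{u}$, and the chain rule applied to $\theta \circ T^n$, together with the base-case estimate applied at the point $T^n \omega$, gives
\[
D_{\vec{u}} (\theta \circ T^n)(\omega) = \alpha^n \, (D_{\vec{u}} \theta)(T^n \omega) > c \alpha^n.
\]
The problem is therefore reduced to showing that the correction term $D_{\vec{u}} \phi_n(\omega)$ is either non-negative or contributes a perturbation dominated in absolute value by the principal growth $c \alpha^n$.

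To control $D_{\vec{u}} \phi_n$, I would implicitly differentiate the defining relation $\cot \phi_n = \lambda^2 g(T^n \omega)\, \theta_{n-1}$, obtaining
\[
D_{\vec{u}} \phi_n = -\frac{\lambda^2 \alpha^n (D_{\vec{u}} g)(T^n \omega)\, \theta_{n-1} + \lambda^2 g(T^n \omega) \, D_{\vec{u}} \theta_{n-1}}{1 + \lambda^4 \, g(T^n \omega)^2 \, \theta_{n-1}^2}.
\]
Using the identity $D_{\vec{u}} g = -2(t - v)\, D_{\vec{u}} v$, the inductive lower bound, and the uniform positive sign of $D_{\vec{u}} v$, one checks that for $\theta_{n-1}$ bounded away from zero the $\lambda^4$-gain in the denominator outweighs the $\lambda^2$ factor in the numerator, so that $|D_{\vec{u}} \phi_n|$ is negligible compared to the principal term $\alpha^n \, (D_{\vec{u}} \theta)(T^n \omega)$, and the induction closes with $D_{\vec{u}} \theta_n > c \alpha^n$ after a further relabeling of $c$.

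The main obstacle is the angular window where $\theta_{n-1}$ is close to zero: there the denominator loses its $\lambda^4$-gain and the correction $D_{\vec{u}} \phi_n$ can be comparable in size to the leading term. The resolution is a sign analysis in that regime, exploiting the fact that the assumption $D_{\vec{u}} v > c$ pins down the sign of both numerator contributions so that they align with the expanding contribution from $D_{\vec{u}}(\theta \circ T^n)$, rather than cancel it. Once this regime is handled, the estimate $D_{\vec{u}} \theta_n(\omega) > c \alpha^n$ holds at every differentiability point $\omega$ and uniformly in $\lambda > 0$, completing the induction.
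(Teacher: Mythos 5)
Your base case and the decomposition $\theta_n = \phi_n + \theta\circ T^n$ with $D_{\vec u}(\theta\circ T^n)(\omega)=\alpha^n(D_{\vec u}\theta)(T^n\omega)>c\alpha^n$ are both right and match the paper. The inductive step, however, has two genuine gaps.

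First, you differentiate the relation $\cot\phi_n = \lambda^2 g(T^n\omega)\,\theta_{n-1}$. That is the (typo'd) display in the paper's statement; the expression actually used throughout the paper's proof of this lemma and in Lemma~\ref{l.disc} is $\phi_n=\cot^{-1}\!\bigl(\lambda^2 g(T^n\omega)\cot\theta_{n-1}\bigr)$. This matters: differentiating the correct expression produces, in the numerator of $D_{\vec u}\phi_n$, the term $-\lambda^2 g(T^n\omega)\bigl(1+\cot^2\theta_{n-1}\bigr)D_{\vec u}\theta_{n-1}$, which after the overall minus sign is a \emph{positive} contribution. With your formula the analogous term is $\lambda^2 g(T^n\omega)D_{\vec u}\theta_{n-1}$ and after the overall minus it comes out \emph{negative}, of size up to order $\lambda^2\alpha^{n-1}$. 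That is far larger than the principal gain $c\alpha^n$ and the induction cannot close. The inner $\cot$ is not cosmetic; it flips the sign of the term that carries the inductive hypothesis.

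Second, even after the formula is corrected, the closing mechanism is not \textquotedblleft non-negative or a small perturbation,\textquotedblright{} and it is not a sign alignment. In the dangerous window---with the corrected formula this is $\theta_{n-1}$ near $\pi/2$, i.e.\ $|g(T^n\omega)\cot\theta_{n-1}|\lesssim\lambda^{-3/2}$, not $\theta_{n-1}$ near $0$---the $D_{\vec u}g$-contribution to $D_{\vec u}\phi_n$ can indeed be negative and of size comparable to $\alpha^n$: its sign is governed by $(t-v(T^n\omega))\cot\theta_{n-1}$, which is not pinned down by $D_{\vec u}v>c$. What rescues the estimate there is precisely the positive term you intend to discard: in that window the denominator $1+(\lambda^2 g\cot\theta_{n-1})^2$ is $O(1)$, so the $(1+\cot^2\theta_{n-1})$-term contributes at least $c\lambda\,D_{\vec u}\theta_{n-1}>c\lambda\alpha^{n-1}$, which for large $\lambda$ dominates any $O(\alpha^n)$ loss from the $D_{\vec u}g$-term. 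In the complementary window $|g(T^n\omega)\cot\theta_{n-1}|\gtrsim\lambda^{-3/2}$, the $D_{\vec u}g$-contribution is not sign-controlled either; it is instead \emph{small in absolute value} (of order $\lambda^{-1/2}$), so it cannot overcome the uniform margin $D_{\vec u}v(T^n\omega)/g(T^n\omega)>c$ coming from $\theta\circ T^n$, and the positive $(1+\cot^2)$-term can simply be dropped. This two-regime dichotomy, with the $\lambda^{-3/2}$ threshold and the role reversal of which term carries the day, is the substance of the paper's argument and is missing from your write-up; replacing it with a \textquotedblleft sign analysis\textquotedblright{} does not supply a valid proof.
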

\begin{proof}
    Consider $n=0$ and let $\omega \in \TT^2$.
    \begin{align*}
        D_{\vec{u}}\theta_0(\omega) &= J_{\theta}(\omega)\vec{u}\\
        &= \begin{pmatrix}
            \partial_{\omega_1} \theta(\omega) & \partial_{\omega_2}\theta(\omega)
        \end{pmatrix} \vec{u}\\
        &= 
        \begin{pmatrix}
            \frac{\partial_{\omega_1}v(\omega)}{g(\omega)} & \frac{\partial_{\omega_2} v(\omega)}{g(\omega)}
        \end{pmatrix} \vec{u}\\
        &= \frac{1}{g(\omega)} D_{\vec{u}}v(\omega)\\
        &> c
    \end{align*}
    Assume for induction, (\ref{i.lb}) holds for some $n-1$, then we show that (\ref{i.lb}) holds for $n$. A minor computation gives,
    \begin{align*}
        J_{\theta_n}(\omega) &= J_{\phi_n}(\omega) + J_{\theta\circ T^n}(\omega)\\
        &= \begin{pmatrix}
            \partial_{\omega_1}\phi_n(\omega) & \partial_{\omega_2}\phi_n(\omega)
        \end{pmatrix} + 
        \begin{pmatrix}
            \partial_{\omega_1}[\theta(T^n\omega)] & \partial_{\omega_2}[\theta(T^n\omega)]
        \end{pmatrix}\\
        &= \begin{pmatrix}
            \partial_{\omega_1}\phi_n(\omega) + \partial_{\omega_1}[\theta(T^n\omega)]& \partial_{\omega_2}\phi_n(\omega) + \partial_{\omega_2}[\theta(T^n\omega)]
        \end{pmatrix}
    \end{align*}
    which implies
    \[D_{\vec{u}}\theta_n(\omega) = \alpha \partial_{\omega_1}\phi_n(\omega) + \alpha \partial_{\omega_1}[\theta(T^n\omega)] + \partial_{\omega_2}\phi_n(\omega) + \partial_{\omega_2}[\theta(T^n\omega)].\]
    We will compute $\partial_{\omega_1}$, as the computation for $\partial_{\omega_2}$ is completely analogous.
    \begin{align}
        \partial_{\omega_1}[\phi_n(\omega)] &= \frac{-\lambda^2}{1+(\lambda^2g(T^n\omega)\cot\theta_{n-1}(\omega))^2}\Bigg[\partial_{\omega_1}[g(T^n\omega)]\cot\theta_{n-1}(\omega) + g(T^n\omega)\partial_{\omega_1}[\cot\theta_{n-1}(\omega)] \Bigg] \nonumber \\
        &= -\frac{\lambda^2\partial_{\omega_1}[g(T^n\omega)]\cot\theta_{n-1}(\omega)}{1+(\lambda^2g(T^n\omega)\cot\theta_{n-1}(\omega))^2} - \frac{\lambda^2g(T^n\omega)\partial_{\omega_1}[\cot\theta_{n-1}(\omega)]}{1+(\lambda^2g(T^n\omega)\cot\theta_{n-1}(\omega))^2} \nonumber\\
        &= \frac{2\lambda^2\cot\theta_{n-1}(\omega)(t-v(T^n\omega))}{1+(\lambda^2g(T^n\omega)\cot\theta_{n-1}(\omega))^2}\partial_{\omega_1}[v(T^n\omega)] \label{x1}\\
        &+ \frac{\lambda^2g(T^n\omega)(1+\cot^2\theta_{n-1}(\omega))}{1+(\lambda^2g(T^n\omega)\cot\theta_{n-1}(\omega))^2}\partial_{\omega_1}[\theta_{n-1}(\omega)] \label{x2}\\
        \partial_{\omega_1}[\theta(T^n\omega)] &= \partial_{\omega_1}[\cot^{-1}(t-v(T^n\omega)] \nonumber\\
        &= \frac{1}{1 + (t-v(T^n\omega))^2}\partial_{\omega_1}[v(T^n\omega)]\nonumber\\
        &= \frac{1}{g(T^n\omega)}\partial_{\omega_1}[v(T^n\omega)]\label{x3}
    \end{align}
    We consider
    \[(\ref{x1}) + (\ref{x3}) =  \Bigg[\frac{1}{g(T^n\omega)} + \frac{2\lambda^2\cot\theta_{n-1}(\omega)(t-v(T^n\omega))}{1+(\lambda^2g(T^n\omega)\cot\theta_{n-1}(\omega))^2}\Bigg] \partial_{\omega_1}[v(T^n\omega)]\]
    and let $h(\omega) = \lambda^2g(T^n\omega)\cot\theta_{n-1}(\omega)$ and let $f(\omega)$ denote the inside of the brackets. Then for any $\omega\in \TT^2$
    \begin{align}
        |f(\omega)| &< \Bigg|\frac{1}{g(T^n\omega)}\Bigg| + \Bigg|\frac{\lambda^2\cot\theta_{n-1}(\omega)g(T^n\omega)2(t-v(T^n\omega))}{1+(\lambda^2g(T^n\omega)\cot\theta_{n-1}(\omega))^2}\Bigg| \label{ineqf<}\\
        &< 1 + \frac{4|h(\omega)|}{1+h^2(\omega)}\nonumber\\
        &< 3 \nonumber
    \end{align}
    If $\omega\in \TT^2$ satisfies $|g(T^n\omega)\cot\theta_{n-1}(\omega)|<C\lambda^{-\frac{3}{2}}$ then
    \begin{equation}\label{ineq<C}
        \frac{\lambda^2g(T^n\omega)(1+\cot^2\theta_{n-1}(\omega))}{1+(\lambda^2g(T^n\omega)\cot\theta_{n-1}(\omega))^2}\partial_{\omega_1}[\theta_{n-1}] > c\lambda\partial_{\omega_1}[\theta_{n-1}(\omega)]
    \end{equation}
    On the other hand, if $\omega \in \TT^2$ satisfies $|g(T^n\omega)\cot\theta_{n-1}(\omega)|>c\lambda^{-\frac{3}{2}}$, then
    \begin{align}
        f(x) &> c - \frac{\lambda^2\cot\theta_{n-1}(\omega)g(T^n\omega)2(t-v(T^n\omega))}{1+(\lambda^2g(T^n\omega)\cot\theta_{n-1}(\omega))^2} \label{ineqf>}\\
        &> c - \frac{C|h(\omega)|}{1+h^2(\omega)} \nonumber\\
        &> c - \frac{C\lambda^{\frac{1}{2}}}{1+\lambda} \nonumber\\
        &>c. \nonumber
    \end{align}
    Note that for any $\omega\in \TT^2$ we have
    \begin{equation}\label{ineqany}
        \frac{\lambda^2g(T^n\omega)(1+\cot^2\theta_{n-1}(\omega))}{1+(\lambda^2g(T^n\omega)\cot\theta_{n-1}(\omega))^2}\partial_{\omega_1}[\theta_{n-1}(\omega)] > 0
    \end{equation}
    If $|g(T^n\omega)\cot\theta_{n-1}|<C\lambda^{-\frac{3}{2}}$, then by (\ref{ineqf<}) and (\ref{ineq<C})
    \begin{equation}\label{px1}
        \partial_{\omega_1}[\theta_n(\omega)] > c\lambda \partial_{\omega_1}[\theta_{n-1}(\omega)] - 3\partial_{\omega_1}[v(T^n\omega)]
    \end{equation}
    If $|g(T^n\omega)\cot\theta_{n-1}|>c\lambda^{-\frac{3}{2}}$, then by (\ref{ineqf>}) and (\ref{ineqany})
    \begin{equation}\label{px2}
        \partial_{\omega_1}[\theta_{n}(\omega)] > f(\omega) \partial_{\omega_1}[v(T^n\omega)] >c \partial_{\omega_1}[v(T^n\omega)]
    \end{equation}
    By the same computation, (\ref{px1}) and (\ref{px2}) hold for $\partial_{\omega_2}$. Consider the case when $|g(T^n\omega)\cot\theta_{n-1}|<C\lambda^{-\frac{3}{2}}$ we find
    \begin{align*}
        D_{\vec{u}}\theta_n &=  u_1\partial_{\omega_1}\theta_n + \partial_{\omega_2}\theta_n\\
        &> c\lambda(u_1\partial_{\omega_1}[\theta_{n-1}] + \partial_{\omega_2}[\theta_{n-1}]) - 3(u_1\partial_{\omega_1}[v(T^n\omega)] + \partial_{\omega_2}[v(T^n\omega)])\\
        &= c\lambda D_{\vec{u}}[\theta_{n-1}] - 3D_{\vec{u}}[v\circ T^n](\omega)\\
        &> c\lambda \alpha^{n-1} - 3\alpha^n  D_{\vec{u}}[v](T^n\omega)\\
        &> c\alpha^n.
    \end{align*}
    Where we use the fact that $\lambda$ is sufficiently large. For the latter case, $|g(T^n\omega)\cot\theta_{n-1}|>c\lambda^{-\frac{3}{2}}$ we have
    \begin{equation}\label{i.lb2}
        D_{\vec{u}}\theta_n = u_1\partial_{\omega_1}\theta_n + \partial_{\omega_2}\theta_n > cu_1\partial_{\omega_1}[v(T^n\omega)] + c\partial_{\omega_2}[v(T^n\omega)] = c D_{\vec{u}}[v\circ T^n](\omega)    
    \end{equation}
    Simplifying $J_{v\circ T^n}$ we find
    \begin{align}
        J_{v\circ T^n}(\omega) &= J_v(T^n\omega)J_{T^n}(\omega) \label{i.j}\\
        &= J_v(T^n\omega)J_{T}(T^{n-1}\omega)J_{T^{n-1}}(\omega) \nonumber \\
        &\vdots \nonumber \\ 
        &=  J_v(T^n\omega) \prod_{k=0}^{n-1}J_T(T^k\omega)\nonumber \\
        &= J_v(T^n\omega) \begin{pmatrix}
            2 & 1\\
            1 & 1
        \end{pmatrix}^n \nonumber
    \end{align}
    Combining (\ref{i.lb2}) and (\ref{i.j})
    \[D_{\vec{u}}[v\circ T^n](\omega) = J_v(T^n\omega) \begin{pmatrix}
            2 & 1\\
            1 & 1
        \end{pmatrix}^n \vec{u} = J_v(T^n\omega) \alpha^n \vec{u} = \alpha^n D_{\vec{u}}[v](T^n\omega) > c\alpha^n\]
    which is (\ref{i.lb}).
\end{proof}

\begin{lemma}\label{l.disc}
    Let $\D'_{n,z}, \D_{n,z}$ be the set of discontinuities for $\theta(T^nW_z)$ and $\theta_n|_{W_z}$, respectively. Let $Card(S)$ be the cardinality of a set $S$, then
    \begin{align}
        Card(\D'_n) < \alpha^{n+2} \label{e.disc}\\
        Card(\D_n) < \sum_{j=0}^{n+1} \alpha^{j+1} \label{e.disc2}
    \end{align}
    for any $n\in \ZZ_{\geq 0}$ and $z\in (0,\sqrt{2})$.
\end{lemma}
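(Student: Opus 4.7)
The plan is to pass to the universal cover $\RR^2$ and reduce both cardinality bounds to a geometric count of integer-grid crossings.

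For the bound (\ref{e.disc}), I would lift $W_z$ to a line segment $\widetilde W_z \subset [0,1)^2 \subset \RR^2$ in the unstable direction of length $\ell_z$. Since $T$ acts linearly on $\RR^2$ with $T\vec u = \alpha \vec u$, the iterate $T^n \widetilde W_z$ is still a line segment in the direction $\vec u/|\vec u|$, now of length $L = \alpha^n \ell_z$. Because $v$ is $C^1$ on $[0,1)^2$ and generically discontinuous across the boundary of the fundamental domain, the discontinuities of $\theta\circ T^n|_{W_z}$ are exactly the points $\omega\in W_z$ whose iterate $T^n\omega$ lies on $\partial[0,1)^2$; these are in bijection with the intersections of $T^n\widetilde W_z$ with the integer grid $\{x\in\ZZ\}\cup\{y\in\ZZ\}$.

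A line segment of length $L$ in direction $(\cos\psi,\sin\psi)$ meets at most $L|\cos\psi|+1$ vertical grid lines and at most $L|\sin\psi|+1$ horizontal ones, so at most $L(|\cos\psi|+|\sin\psi|)+2$ grid lines in total. For the unstable direction $\vec u=(\phi,1)$ with $\phi=(1+\sqrt5)/2$, the identities $\phi^2=\phi+1=\alpha$ and $|\vec u|^2=\alpha+1$ give $(u_1+u_2)/|\vec u|=\alpha/\sqrt{\alpha+1}$. Combined with $\ell_z^2\leq(5-\sqrt5)/2$ and the elementary identity $(5-\sqrt5)/(2(\alpha+1))=1/\alpha$, this yields
\[
\mathrm{Card}(\D'_n)\;\leq\;L\cdot\frac{\alpha}{\sqrt{\alpha+1}}+2\;\leq\;\alpha^{n+\frac12}+2.
\]
Since $\alpha^{2}-\alpha^{1/2}>2$, the bound $\mathrm{Card}(\D'_n)<\alpha^{n+2}$ follows for every $n\geq 0$.

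For (\ref{e.disc2}), I would exploit the recursion $\theta_n=\phi_n+\theta\circ T^n$, where $\phi_n$ is a smooth function of $\theta_{n-1}$ and $g\circ T^n$, and where $g\circ T^n$ has the same discontinuity set on $W_z$ as $\theta\circ T^n$, namely $\D'_n$ (both inherit their singularities from $v\circ T^n$). A straightforward induction on $n$ then gives $\D_n\subseteq\bigcup_{k=0}^{n}\D'_k$, and summing (\ref{e.disc}) yields
\[
\mathrm{Card}(\D_n)\;<\;\sum_{k=0}^{n}\alpha^{k+2}\;\leq\;\sum_{j=0}^{n+1}\alpha^{j+1}.
\]
The only substantive step is the arithmetic identity that collapses the displacement bound to $\alpha^{n+1/2}$; everything else is careful bookkeeping. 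A minor caveat to check is that $W_z\cong[0,\ell_z)$ contributes its starting boundary endpoint, adding at most one extra discontinuity, which is comfortably absorbed by the slack $\alpha^{n+2}-\alpha^{n+1/2}-2>0$.
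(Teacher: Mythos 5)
Your proof is correct and follows the same strategy as the paper: for (\ref{e.disc}) one counts integer-grid crossings of the stretched segment $T^n W_z$ of length $\alpha^n\ell_z$, and for (\ref{e.disc2}) one unrolls the recursion $\theta_n = \phi_n + \theta\circ T^n$ to obtain $\D_{n,z}\subset\bigcup_{k\le n}\D'_{k,z}$ and sums. Your grid-crossing estimate (projecting onto the axes with the components of the unit unstable vector, giving $\alpha^{n+1/2}+2$) is sharper than the paper's cruder triangle bound $2\alpha^n\ell_z+2$, but both lead to the same conclusion $\mathrm{Card}(\D'_{n,z})<\alpha^{n+2}$, so the refinement is not needed.
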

\begin{proof}
    The discontinuities of 
    \[\theta(T^n\omega) = \cot^{-1}(t-v(T^n\omega))\]
    restricted to $W_z$ is the set of discontinuities for $v\circ T^n$. By our assumptions on $v$, the discontinuities occur when $T^nW_z$ pass through the boundary of the fundamental domain. Concretely, consider any $n\in \NN$ and recall $\ell_z\in [0, \sqrt{\frac{5-\sqrt{5}}{2}}]$ is the length of $W_z$. The $T^{n}$ action on $W_z$ moves this line to another parallel line and stretches the line by a factor of $\alpha^{n}$. Hence, the length of $T^{n}W_z$ is $\alpha^{n}\ell_z$.
    
    With $\alpha^n\ell_z$ at hand, one can compute the maximal number of $\ZZ^2$ crossings. To do so, remark that $\alpha^{n}\ell_z$ can be thought of as the hypotenuse of a triangle. Therefore the lengths of the width and height is bounded above by $\alpha^{n}\ell_z$. This allows for us to count the number of $\ZZ$ crossings in the $x$ and $y$ direction. Counting in this way provides the bounds $2\ell_z\alpha^n + 2 < \alpha^{n+2}$. Consequently,
    \[Card(D'_{n,z}) < \alpha^{n+2}\]
    which shows (\ref{e.disc}).
    
    To show (\ref{e.disc2}), consider $n=0$ and notice that $\D_0 = \D^1_0$, since $\theta_0 = \theta$. Assume that (\ref{e.disc2}) holds for some $n\in \NN$ and observe that $g(T^{n+1}W_z)$ and $\theta(T^{n+1}W_z)$ have the same set of discontinuities $\D'_{n+1,z}$. Then
    \[\phi_{n+1} = \cot^{-1}(\lambda^2 g(T^{n+1}\omega)\cot\theta_n(\omega))\]
    has discontinuities $\D_{n,z}$ as well as $\D'_{n+1,z}$. Hence, the set of discontinuities of $\theta_{n+1}|_{W_z}$ satisfies the containment $\D_{n+1,z} \subset \D'_{n,z}\cup \D_{n,z}$. By induction (\ref{e.disc2}) holds for any $n\in \ZZ_{\geq 0}$.
\end{proof}

\begin{remark}\label{r.1}
    With Lemma \ref{l.lb} and \ref{l.disc} we have that $\theta_n|_{W_z}$ is piecewise $C^1$ and monotone. Moreover, in the proof of Lemma \ref{l.disc}, we can additionally add two more points to $\D'_{n,z}$, since $2\ell_z\alpha^n + 4 < \alpha^{n+2}$ for every $n\in \ZZ_{\geq 0}$. Thus, the bound in both (\ref{e.disc}) and (\ref{e.disc2}) holds with the additional points. The importance is that in Lemma \ref{l.card}, we will need to count the discontinuities plus exactly two more points. This means that the two extra points are already accounted for, so we do not have to add additional points.    
\end{remark}

\begin{lemma}\label{l.card}
    For each $z\in (0,\sqrt{2})$, we define
    \[\C_{n,z} = \{\omega\in W_z: \theta_n(\omega)\in \frac{\pi}{2} + \pi\ZZ\}.\]
    Then
    \begin{equation}\label{e.card}
        Card(\C_{n,z}) < \sum_{j=0}^{n+2} \alpha^{j+1}
    \end{equation}
    for all $n\in \NN$ and $z\in (0,\sqrt{2})$.
\end{lemma}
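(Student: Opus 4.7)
The plan is to partition $W_z$ along the discontinuities of $\theta_n|_{W_z}$ provided by Lemma~\ref{l.disc} and Remark~\ref{r.1}, bound the number of $\frac{\pi}{2} + \pi\ZZ$ crossings on each monotone continuity piece by a universal constant, and then sum the contributions.

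By Lemma~\ref{l.lb} and Remark~\ref{r.1}, on each continuity piece of $W_z \setminus \D_{n,z}$ the function $\theta_n$ is $C^1$ and strictly monotone increasing, and $Card(\D_{n,z}) < \sum_{j=0}^{n+1}\alpha^{j+1}$ with the two boundary points of $W_z$ already accommodated. The crucial input is a uniform range bound: since $\theta_0 = \cot^{-1}(t - v(\omega)) \in (0,\pi)$ and the recursion $\theta_n = \phi_n + \theta \circ T^n$ writes $\theta_n$ as a sum of two $\cot^{-1}$-values each lying in $(0,\pi)$, a short induction on $n$ gives $\theta_n(\omega) \in (0, 2\pi)$ for every $\omega$ and every $n$. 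Consequently, on any continuity piece $J$ the continuous monotone image $\theta_n(J)$ is an interval of length strictly less than $2\pi$.

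Since consecutive elements of $\frac{\pi}{2} + \pi\ZZ$ are spaced exactly $\pi$ apart, any interval of length strictly less than $2\pi$ meets this set in at most two points, so each continuity piece contributes at most $2$ elements to $\C_{n,z}$. With at most $Card(\D_{n,z}) + 1$ continuity pieces, we obtain
\[
Card(\C_{n,z}) \leq 2\bigl(Card(\D_{n,z}) + 1\bigr) < 2\sum_{j=0}^{n+1}\alpha^{j+1} + 2.
\]
The desired estimate is therefore equivalent to $\sum_{j=0}^{n+1}\alpha^{j+1} + 2 < \alpha^{n+3}$. Using the geometric-sum identity $\sum_{j=0}^{n+1}\alpha^{j+1} = \alpha(\alpha^{n+2}-1)/(\alpha-1)$ together with $\alpha = (3+\sqrt{5})/2 > 2$, a direct manipulation reduces the required inequality to $(\alpha-2)(\alpha^{n+3}-1)/(\alpha-1) > 0$, which holds for every $n \geq 0$.

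The main obstacle is establishing and exploiting the uniform range $\theta_n \in (0, 2\pi)$; once this is in hand, the per-piece count is a universal constant and the claimed bound follows at once from the already-proven cardinality estimate on $\D_{n,z}$. This range control is what converts the growth in the discontinuity count (which is of the same order $\alpha^{n+2}$) into a comparable bound on $Card(\C_{n,z})$, rather than one that grows polynomially worse in $n$.
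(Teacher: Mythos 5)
The weak point is exactly the step you flagged as ``crucial'': the claim that $\theta_n(\omega) \in (0, 2\pi)$ for all $n$, and hence that each continuity piece of $\theta_n|_{W_z}$ meets $\frac{\pi}{2} + \pi\ZZ$ at most twice. This does not hold under the reading of $\phi_n$ that makes Lemma~\ref{l.disc} correct, and with the alternative reading your continuity pieces are not the ones counted by $\D_{n,z}$.

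For Lemma~\ref{l.disc} to assert that the discontinuities of $\theta_{n}|_{W_z}$ are confined to the set $\D_{n,z}$ arising from crossings of the boundary of the fundamental domain, the $\cot^{-1}$ in the recursion $\phi_n = \cot^{-1}(\lambda^2 g(T^n\omega)\cot\theta_{n-1})$ has to be understood as a continuous branch along the leaf, not the principal branch with values in $(0,\pi)$. The quantity $\lambda^2 g\cot\theta_{n-1}$ has a pole whenever $\theta_{n-1}$ crosses $\pi\ZZ$, and the principal-branch value of $\cot^{-1}$ jumps by $\pi$ there, whereas the continuous lift of $\phi_n$ passes smoothly through $\pi\ZZ$. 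In the continuous-branch reading, $\phi_n$ winds together with $\theta_{n-1}$: each time $\theta_{n-1}$ advances by $\pi$, so does $\phi_n$. Consequently, on a continuity piece $I_j$, the variation of $\phi_n$ is comparable to that of $\theta_{n-1}$ there, which by Lemma~\ref{l.lb} is of order $\alpha^{n-1}m(I_j)$ and is not bounded by $\pi$. Hence $\theta_n(I_j)$ can far exceed an interval of length $2\pi$, and the per-piece crossing count is not a universal constant. If you instead insist on the principal branch, so that $\theta_n\in(0,2\pi)$ pointwise, then $\theta_n$ acquires extra jump discontinuities wherever some $\theta_{j-1}$ ($j\le n$) crosses $\pi\ZZ$; those points are not in $\D_{n,z}$, their number is of the same magnitude as $Card(\C_{j-1})$, and attempting to patch your count produces a self-referential recursion that does not close at the geometric rate $\alpha^n$.

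The paper resolves this with an induction rather than a one-shot count. On each $I_j$, the $\frac{\pi}{2}+\pi\ZZ$-crossings of $\phi_{n+1}$ are exactly $\C_n\cap I_j$ (the winding compatibility noted above), and since $\theta\circ T^{n+1}$ varies over an interval of length $< \pi$ on $I_j$, one gets $Card(\C_{n+1}\cap I_j) \le Card(\C_n\cap I_j) + 1$. Summing over the $k-1$ pieces and using the inductive hypothesis on $Card(\C_n)$ together with the bound $k < \sum_{j=0}^{n+2}\alpha^{j+1}$ from Lemma~\ref{l.disc} and Remark~\ref{r.1} yields the claim. Some version of this recursion linking $\C_{n+1}$ to $\C_n$ is essential; a fixed per-piece bound cannot replace it.
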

\begin{proof}
    We induct over $n$. Let $n=0$ and notice that \[\theta_0(\omega) = \frac{\pi}{2} \iff t = v(\omega)\]
    This can only occur at one $\omega\in W_z$, since we have that $D_{\vec{u}}v > c$. Now assume for some $n$ that (\ref{e.card}) holds. Observe that
    \[\{\omega\in W_z: \phi_{n+1}(\omega)\in \frac{\pi}{2} + \pi\ZZ\} = \{\omega\in W_z: \theta_{n}(\omega)\in \frac{\pi}{2} + \pi\ZZ\}\]
    and by Lemma \ref{l.disc} we have $k<\sum_{j=0}^{n+2} \alpha^{j+1}$ discontinuities for $\theta_{n+1}|_{W_z}$. By remark \ref{r.1}, we may assume that $k$ accounts for the two additional points. Therefore, we may label the discontinuities and two additional points, which are the end points, in order: 
    \[\omega_1 < \omega_2 < \cdots < \omega_k.\]
    The order is induced by the homeomorphism of $W_z$ to $[0, \ell_z)$. For each $1\leq j \leq k - 1$ we define $I_j = [\omega_j, \omega_{j+1})$. We make two remarks
    \begin{enumerate}
        \item $W_z = \bigsqcup_{j=1}^{k-1} I_j$.
        \item For each $1< j< k - 1$, then $T^{n+1}I_j = W_l$ for some $l\in (0, \sqrt{2})$.  It may fail for $j=1$ and $k-1$, since it is possible the end points do not stretch to form an entire $W_l$ leaf. Though, the end intervals are subsets of some $W_l$.
    \end{enumerate}
    Consider any $I_j$,
    \begin{align*}
        |\theta_{n+1}(I_j)| &\leq |\phi_{n+1}(I_j)| + |\theta(T^{n+1}I_j)|\\
        &\leq |\phi_{n+1}(I_j)| + |\theta(W_l)|\\
        &< |\phi_{n+1}(I_j)| + \pi
    \end{align*}
    by Lemma \ref{l.disc} we have that $\theta_{n+1}$ restricted to leaves are $C^1$ and monotone. In other words, $\theta_{n+1}$ is $C^1$ and monotone on $I_j$ and for $\omega\in W_z$ we have $c < \theta(\omega) < \pi - c$. These facts imply
    \[Card(\C_{n+1}\cap I_j) \leq Card(C_n\cap I_j) + 1.\]
    By direct computation 
    \begin{align*}
        Card(\C_{n+1}) &= \sum_{j=1}^{k-1} Card(\C_{n+1}\cap I_j)\\
        &\leq \sum_{j=1}^{k-1} Card(\C_{n}\cap I_j) + k - 1\\
        &\leq Card(\C_n) + k\\
        &< \sum_{j=0}^{n+2} \alpha^{j+1} + \sum_{j=0}^{n+2} \alpha^{j+1}\\
        &= 2\sum_{j=0}^{n+2} \alpha^{j+1}\\
        &\leq \sum_{j=0}^{n+3} \alpha^{j+1}
    \end{align*}
    we have shown Lemma \ref{l.card}.
\end{proof}

As a consequence of the Lemmas, one can now bound the measure of the set of $\delta$-balls around \say{bad points}.

\begin{corollary}\label{c.bm}
    Let $\delta>0$ and $||\cdot||_{\RR\PP^1}$ represent the distance to the closest $\pi\ZZ$ point. Define
    \[B_{n,z}(\delta):= \{\omega\in W_z: ||\theta_n(\omega)-\frac{\pi}{2}||_{\RR\PP^1} < \delta\}\]
    then 
    \[\mu_{W_z}(B_{n,z}(\delta)) < C\delta\]
    for all $n\in \NN$ and $z\in (0,\sqrt{2})$.
\end{corollary}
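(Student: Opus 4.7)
The plan is to combine the cardinality bound from Lemma \ref{l.card} with the expanding derivative bound from Lemma \ref{l.lb}. The set $B_{n,z}(\delta)$ is exactly the preimage under $\theta_n|_{W_z}$ of a $\delta$-neighborhood of $\frac{\pi}{2}+\pi\ZZ$, so I would first decompose $W_z$ into the maximal subintervals on which $\theta_n|_{W_z}$ is $C^1$, namely the components of the complement of the discontinuity set $\D_{n,z}$ from Lemma \ref{l.disc}. By Remark \ref{r.1} the restriction of $\theta_n$ to each such piece is $C^1$ and monotone, and Lemma \ref{l.lb} gives $D_{\vec{u}}\theta_n>c\alpha^n$. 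Since $W_z$ is a segment in the unstable direction, the derivative of $\theta_n$ along the natural parametrization of $W_z$ is (up to a constant) bounded below by $c\alpha^n$, so the preimage of any interval of length $2\delta$ through this monotone map has length at most $\frac{2\delta}{c\alpha^n}$.

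Next I would count the number of preimage components. On each monotone piece $I$, the preimage of $\bigcup_{k\in\ZZ}(\frac{\pi}{2}+k\pi-\delta,\frac{\pi}{2}+k\pi+\delta)$ is a disjoint union of intervals, one for each point of $\C_{n,z}\cap I$, plus at most two additional intervals at the endpoints of $I$ (accounting for the case where $\theta_n(I)$ enters a bad neighborhood without crossing $\frac{\pi}{2}+k\pi$). Summing over all pieces, the total number of preimage intervals is bounded by $Card(\C_{n,z})+2(Card(\D_{n,z})+1)$.

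Applying Lemma \ref{l.card} and Lemma \ref{l.disc}, both of these counts are dominated by partial geometric sums of the form $\sum_{j=0}^{n+3}\alpha^{j+1}$, which is at most $C\alpha^n$ for a universal constant $C$ depending only on $\alpha$. Multiplying this count by the per-interval length bound yields
\[
    \mu_{W_z}(B_{n,z}(\delta)) \leq C\alpha^n\cdot\frac{2\delta}{c\alpha^n} < C'\delta,
\]
which is the claimed estimate, uniform in $n$ and $z$. The only subtlety is the boundary bookkeeping in the second step — one has to remember that on each monotone piece the endpoints can contribute partial preimage intervals that do not correspond to points of $\C_{n,z}$ — but Remark \ref{r.1} has already arranged the count so that these boundary terms are absorbed into the same $O(\alpha^n)$ estimate, so no further refinement is needed.
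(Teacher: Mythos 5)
Your proof is correct and takes essentially the same approach as the paper: count the preimage components of the $\delta$-neighborhood of $\tfrac{\pi}{2}+\pi\ZZ$ using Lemmas \ref{l.disc} and \ref{l.card}, bound each component's length by $\tfrac{2\delta}{c\alpha^n}$ via the derivative bound of Lemma \ref{l.lb}, and multiply. You are slightly more explicit than the paper in separating out the $2(\mathrm{Card}(\D_{n,z})+1)$ endpoint intervals that need not contain a point of $\C_{n,z}$; the paper tacitly absorbs these into the same $O(\alpha^n)$ count (cf.\ Remark \ref{r.1}), so the two arguments coincide in substance.
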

\begin{proof}
    By Lemma \ref{l.card}, we have $\sum_{j=0}^{n} \alpha^{j+1}$ many bad points. By Lemma \ref{l.lb} we must move through the $\delta$-balls at the speed of $c\alpha^n$. Combining these facts with $\alpha^{-1}< 1$ we find
    \[\mu_W(B_n(\delta)) \leq  \frac{2\delta}{\alpha^n}\sum_{j=0}^{n+1} \alpha^{j+1} = (\alpha + \alpha^2 + \sum_{j=0}^{n-1} \alpha^{- j}) 2\delta < 4\alpha^2\delta < C\delta.\]
\end{proof}

We are now ready to prove proposition \ref{p.fPLE}. The argument follows an identical structure as \cite{zhenghe2}, which followed by similar arguments in \cite{young}. For completeness we include the proof.

\begin{proof}[Proof of Proposition \ref{p.fPLE}]
    Let $\vec{e}_1, \vec{e}_2$ represent the standard basis of $\RR^2$, then observe 
    \[\lim_{n\rightarrow \infty} \frac{1}{n}\int_{W_z} \log||A_n(\omega)|| dm(\omega) \geq \lim_{n\rightarrow \infty} \frac{1}{n}\int_{W_z} \log||A_n(\omega)\vec{e}_1|| dm(\omega)\]
    and existence follows from Osceledets Multiplicative Ergodic Theorem. Let $\omega\in W_z$, then define
    \[\vec{w}_n(\omega) = A_n(\omega)\vec{e}_1 \text{ and } \vec{v}_n(\omega) = R_{\theta(T^n\omega)}A_n(\omega)\vec{e}_1.\]
    Rewriting we find
    \[\vec{w}_n(\omega) = \Lambda(T^{n+1}\omega) \vec{v}_{n-1}(\omega) \text{ and } \vec{v}_n(\omega) = R_{\theta(T^n\omega)}\vec{w}_n(\omega).\]
    The previous line implies
    \[||\vec{v}_n(\omega)|| = ||\vec{w}_n(\omega)||.\]
    By definition of $\theta_n(\omega)$ we find
    \[\vec{v}_n(\omega) = \cos\theta_n(\omega)||\vec{v}_n(\omega)||\vec{e}_1 + \sin\theta_n(\omega)||\vec{v}_n(\omega)||\vec{e}_2.\]
    Inductively we find
    \begin{align*}
        ||\vec{w}_n(\omega)|| &= ||\Lambda(T^{n+1}\omega) \vec{v}_{n-1}||\\
        &\geq ||\Lambda(T^{n+1}\omega)\cos\theta_{n-1}(\omega)||\vec{v}_{n-1}(\omega)||\vec{e}_1||\\
        &\geq \lambda|\cos\theta_{n-1}(\omega)| ||\vec{v}_{n-1}(\omega)||\\
        &= \lambda|\cos\theta_{n-1}(\omega)| ||\vec{w}_{n-1}(\omega)||\\
        &\vdots\\
        &\geq \lambda^n \prod_{j=0}^{n-1}|\cos\theta_j(\omega)|.
    \end{align*}
    Applying $\log$ on both sides we have 
    \[\log||A_n(\omega)\vec{e}_1|| = \log||\vec{w}_n(\omega)||\geq \log(\lambda^n \prod_{j=0}^{n-1}|\cos\theta_j(\omega)|) = n\log \lambda + \sum_{j=0}^{n-1} \log|\cos\theta_j(\omega)| \]
    and it follows that
    \begin{equation}\label{boundthis}
        \lim_{n\rightarrow \infty} \int_{W_z} \frac{1}{n}\log ||A_n(\omega)\vec{e}_1|| m(\omega)\geq  \log \lambda + \limsup_{n\rightarrow \infty}\frac{1}{n}\sum_{j=0}^{n-1} \int_{W_z} \log|\cos\theta_j(\omega)| dm(\omega).
    \end{equation}
    From (\ref{boundthis}), it suffices to find a lower bound of $\int_{\TT^2} \log|\cos\theta_j(\omega)| dm(\omega)$.
    Fix any $k \in \{0,\dots, n-1\}$, the estimate will be independent of $k$, and define for all $i\in \NN$
    \[J_i := \{\omega\in W_z: ||\theta_k(\omega) - \frac{\pi}{2}||_{\RR\PP^1}< \frac{\delta}{2^i}\}.\]
    By Corollary \ref{c.bm}  
    \[m(J_i) < \frac{C\delta}{2^i}.\]
    A computation gives
    \begin{align*}
        \int_{J_0} \log|\cos\theta_k(\omega)|dm(\omega) &= \int_{J_0} \log|\sin(\theta_k(\omega) - \frac{\pi}{2})|dm(\omega) \\
        &\geq \int_{J_0} \log\frac{2}{\pi}|\theta_k(\omega) - \frac{\pi}{2}|dm(\omega) \\
        &= \sum_{i\in \NN} \int_{J_i\setminus J_{i+1}} \log\frac{2}{\pi}|\theta_k(\omega) - \frac{\pi}{2}|dm(\omega)\\
        &\geq \sum_{i\in \NN} \int_{J_i\setminus J_{i+1}} \log(\frac{2}{\pi} \frac{\delta}{2^{i+1}})dm(\omega)\\
        &= \sum_{i\in \NN} m(J_i\setminus J_{i+1}) \log(\frac{\delta}{\pi2^i})\\ 
        &\geq \sum_{i\in \NN}  \frac{C\delta i}{2^i}\log((\frac{\delta}{\pi})^{\frac{1}{i}} \frac{1}{2})\\
        &\geq \sum_{i\in \NN}  -\frac{C\delta i}{2^i}\\
        &\geq -C\delta.
    \end{align*}
    Consider
    \[J_0^c = \{\omega\in W_z: ||\theta_k(\omega)-\frac{\pi}{2}||\geq  \delta\}\]
    and it follows
    \begin{align*}
        \int_{J_0^c} \log|\cos\theta_k(\omega)|dm(\omega) &= \int_{J_0^c} \log|\sin(\theta_k(\omega) - \frac{\pi}{2})|dm(\omega)\\
        &\geq \int_{J_0} \log\frac{2}{\pi}|\theta_k(\omega) - \frac{\pi}{2}|dm(\omega)\\
        &\geq \int_{J_0} \log(\frac{2}{\pi}\delta)dm(\omega)\\
        &> C\log\delta.
    \end{align*}
    Picking $\delta = \frac{1}{3}$ and putting everything together
    \begin{align*}
        \int_{W} \log|\cos\theta_k(\omega)|dm(\omega) &= \int_{J_0^c} \log|\cos\theta_k(\omega)|dm(\omega) + \int_{J_0} \log|\cos\theta_k(\omega)|dm(\omega)\\
        &> -\frac{1}{3}C - C\log3\\
        &> -C
    \end{align*}
    Therefore we have
    \[\frac{1}{n}\sum_{j=0}^{n-1} \int_{W_z} \log|\cos\theta_j(\omega)| dm(\omega) > -C\]
    for all $n\in \NN$. Implying
    \[\lim_{n\rightarrow \infty} \frac{1}{n}\int_{W_z} \log ||A_n(\omega)\vec{e}_1|| dm(\omega) > \log \lambda - C_0.\]
    Completing the proof of Proposition \ref{p.fPLE}, which proves Theorem \ref{t.plepd} and hence Theorem \ref{t.PLE}.
\end{proof}

\appendix

\section{Polar Decomposition of Schr\"odinger Cocycle}\label{apd}
Given a matrix $M\in \mathrm{SL}(2,\RR)$ 
\begin{enumerate}
    \item \textbf{Polar decomposition} allows us to write $M$ as $M = S_1\sqrt{M^t M}$ where $S_1\in \mathrm{SO}(2,\RR)$ and $\sqrt{M^t M}$ is positive symmetric.
    \item \textbf{Singular value decomposition} allows us to write $\sqrt{M^t M} = S_2\Lambda S_2^t$ where $S_2\in \mathrm{SO}(2,\RR)$ and $\Lambda$ is precisely $\begin{pmatrix}
        ||M|| & 0\\
        0 & ||M||^{-1}
    \end{pmatrix}$.
\end{enumerate}
Applying polar decomposition and singular value decomposition on $M$ one finds
\[M = S_1S_2\Lambda S_2^t.\]
Moreover, for $M\in C^r(\RR/\ZZ, \mathrm{SL}(2,\RR))$, $r\geq 1$, we can let $S(x) = S_1(x)S_2(x)$ and $O(x)= S_2^t(Tx) (S_1S_2)(x)$, which gives
\[S^{-1}(Tx)M(Tx)S(x) = \Lambda(Tx)O(x)\]
By Lemma 10 in \cite{zhenghe1}, $S_1,S_2, \Lambda$ are also $C^1$, as long as $M(x)\notin\mathrm{SO}(2)$ for all $x$. This implies that $M, (\Lambda\circ T)\cdot O$ are conjugate and hence
\[L(T,M) = L(T, (\Lambda\circ T)\cdot O).\]
This allows for one to study the Lyapunov exponent for the Schr\"odinger Cocycle of $\Lambda(Tx)O(x)$ instead of $M$.

Let us explicitly compute this for the Schr\"odinger cocycle. Before doing so recall we may assume that $||v||_\infty \leq 1$ and
\[A^{(E-\lambda v)} = \begin{pmatrix}
    E - \lambda v & -1\\
    1 &0
\end{pmatrix}.\]
We would like to move to dependence on $\lambda$, so we conjugate our cocycle with
\[P = \begin{pmatrix}
    \frac{1}{\sqrt{\lambda}} & 0\\
    0 & \sqrt{\lambda}
\end{pmatrix}.\]
A computation allows us to define $A$ as
\[A = PA^{(E-\lambda v)}P^{-1} = \begin{pmatrix}
    \lambda r & -\frac{1}{\lambda}\\
    \lambda & 0
\end{pmatrix}\]
where $t = \frac{E}{\lambda}$. By the assumption on $v$, we may consider $t\in \I = [-1, 2]$. Define $r: \TT^2\times \I \rightarrow \RR$
\[r(\omega, t) = t-v(\omega)\]
and note that $r\in C^1([0,1)^2 \times \I, \RR)$. In the process of Polar Decomposition, we find an eigenvalue
\[\alpha = \frac{\lambda^2}{2}\Big[r^2 + 1 + \frac{1}{\lambda^4} + \sqrt{(r^2 + 1 + \frac{1}{\lambda^4})^2 - \frac{4}{\lambda^4}}\Big]\]
for $A^TA$. Hyperbolicity of $A\in \mathrm{SL}(2,\RR)$ implies the eigenvalues are $\alpha, \alpha^{-1}$. Define
\[\beta := r^2 + 1 + \frac{1}{\lambda^4}  \sqrt{(r^2 + 1 + \frac{1}{\lambda^4})^2 - \frac{4}{\lambda^4}}.\]
The singular values are $\sqrt{\alpha}, \frac{1}{\sqrt{\alpha}}$ and hence $||A|| = \lambda\sqrt{\frac{\beta}{2}}$. By normalizing and letting \[f(\omega) = \frac{1}{\sqrt{(\beta - \frac{2}{\lambda^4})^2 + (\frac{2r}{\lambda^2})^2}}\]
we can define
\[S_2 = \frac{1}{\sqrt{(\beta - \frac{2}{\lambda^4})^2 + (\frac{2r}{\lambda^2})^2}}\begin{pmatrix}
    \beta - \frac{2}{\lambda^4} & \frac{2r}{\lambda^2}\\
    -\frac{2r}{\lambda^2} & \beta - \frac{2}{\lambda^4}
\end{pmatrix} = f(x,y)\begin{pmatrix}
    \beta - \frac{2}{\lambda^4} & \frac{2r}{\lambda^2}\\
    -\frac{2r}{\lambda^2} & \beta - \frac{2}{\lambda^4}
\end{pmatrix}.\]
Defining
\[\kappa:= \sqrt{\frac{2}{\beta(\omega)}}\beta(T\omega)\beta(\omega)f(T\omega)f(\omega)\]
we can compute the upper left entry of $O(x)$.
\[O_{11}(\omega, t, \lambda) = \kappa\big[r(\omega) - \frac{2r(\omega)}{\lambda^4\beta(T\omega)} - \frac{2r(T\omega)}{\lambda^2\beta(T\omega)} + \frac{4r(T\omega)}{\lambda^6\beta(T\omega)\beta(\omega)}\big]\]
If we let $\lambda\rightarrow \infty$ we find
\begin{align*}
    \beta(\omega,t, \lambda) &\rightarrow 2r^2+2\\
    f(\omega, t, \lambda) &\rightarrow \frac{1}{2r^2+2}
\end{align*}
and it follows that
\[O_{11}(\omega, t, \lambda) \rightarrow \frac{r(\omega)}{\sqrt{r^2(\omega)+1}}\]
where convergence is thought of with respect to $C^1([0,1)^2\times \I, \RR)$. Defining $g: [0,1)^2\times \I \rightarrow \RR$
\[g(\omega, t) = r^2 + 1\]
then $g\in C^1([0,1)^2\times \I, \RR)$. For sufficiently large $\lambda$, $O_{11}(\omega, t, \lambda)$ can be replaced with $O_{11}(\omega, t, \infty)$ since they are close in the $C^1$ norm. Therefore, we may consider the coycle of the form
\[A(\omega, t, \lambda):= \begin{pmatrix}
    \lambda \sqrt{g(T\omega, t)} & 0 \\
    0 & \frac{1}{\lambda} \sqrt{\frac{1}{g(T\omega, t)}}
\end{pmatrix}R_{\theta(\omega, t)}.\]

\end{document}